\documentclass[12pt]{amsart}
\usepackage{graphicx}
\usepackage{amsmath}
\usepackage{amssymb}
\usepackage{amsthm}
\usepackage{tikz}

\newtheorem{thm}{Theorem}
\newtheorem{prop}[thm]{Proposition}
\newtheorem{lem}[thm]{Lemma}
\newtheorem{cor}[thm]{Corollary}
\newtheorem{ques}{Question}

\newtheorem*{thm*}{Theorem}

\theoremstyle{definition}
\newtheorem{defn}[thm]{Definition}

\def\PA{\mathsf{PA}}

\def\lpa{{\mathcal L}_{\PA}}
\def\lpax{{\mathcal L}_{\PA} \cup \{ X \}}

\newcommand{\anglebracket}[1]{\langle #1 \rangle}

\newcommand{\mc}[1]{\mathcal{#1}}

\DeclareMathOperator{\Lt}{Lt}

\DeclareMathOperator{\scl}{Scl}
\DeclareMathOperator{\Def}{Def}

\DeclareMathOperator{\dcl}{dcl}

\DeclareMathOperator{\dom}{dom}

\begin{document}

\title{CP-generic expansions of models of Peano Arithmetic}
\author{Athar Abdul-Quader}
\address{School of Natural and Social Sciences, 
SUNY Purchase College,
735 Anderson Hill Road,
Purchase, NY 10577}
\email{athar.abdulquader@purchase.edu}
\author{James H. Schmerl}
\address{Department of Mathematics,
University of Connecticut,
Storrs, CT 06269}
\email{james.schmerl@uconn.edu}

\begin{abstract}
	We study notions of genericity in models of $\PA$, inspired by lines of inquiry initiated by Chatzidakis and Pillay and continued by Dolich, Miller and Steinhorn in general model-theoretic contexts. These papers studied the theories obtained by adding a ``random" predicate to a class of structures. Chatzidakis and Pillay axiomatized the theories obtained in this way. In this article, we look at the subsets of models of $\PA$ which satisfy the axiomatization given by Chatzidakis and Pillay; we refer to these subsets in models of $\PA$ as CP-generics. We study a more natural property, called strong CP-genericity, which implies CP-genericity. We use an arithmetic version of Cohen forcing to construct (strong) CP-generics with various properties, including ones in which every element of the model is definable in the expansion, and, on the other extreme, ones in which the definable closure relation is unchanged.
\end{abstract}

\keywords{models of arithmetic, expansions, genericity, definability}
\subjclass[2010]{Primary 03C62, 03H15}

\maketitle

\noindent
\section{Introduction}

In \cite{chatzidakis_pillay}, Chatzidakis and Pillay studied ``generic'' expansions of theories. Given a first order theory $T$ in a language $\mc{L}$, and a unary predicate $P \not \in \mathcal{L}$, Chatzidakis and Pillay axiomatized the theory $T_P$, the model companion of $T$ in the language $\mathcal{L} \cup \{ P \}$. This is, roughly, the theory of the expansion of models of $T$ by a generic unary predicate. Dolich, Miller and Steinhorn, in \cite{dms13} and \cite{dms16}, continued this work in studying the notion of adding ``generic'' predicates to o-minimal theories.

In the context of arithmetic, the methods used in the above papers do not work as well. However, one can take a particular model $\mc{M} \models \PA$ and consider the Chatzidakis-Pillay conditions on so-called ``generic" subsets of $M$. This line of inquiry was explored in \cite{neutral}, and we investigate this further here.

\subsection{Background}

All models in this article are models of {\sf PA} and their expansions. We use $\mathcal M$, $\mathcal N$, $\mathcal K$, etc.~for models of {\sf PA}, and $M$, $N$, $K$, etc.~for their respective domains.

By convention, definability refers to definability with parameters. If a set is definable without parameters, then we say that it is 0-definable. Given a model $\mc{M} \models \PA$ and $a \in M$, $\scl(a)$ denotes the Skolem closure of $a$; since $\PA$ has definable Skolem terms, this coincides with $\dcl(a)$. For models of $\PA$, we refer to Skolem closures rather than definable closures in the rest of this article for this reason; for expansions of models of $\PA$ which do not have definable Skolem terms, we continue to refer to definable closures.

The following two definitions appeared in \cite{neutral}.

\begin{defn} \label{cpg}A subset $X$ of $M$ is called {\it CP-generic} if whenever $D \subseteq M^n$ is definable using only $a \in M$ as a parameter and $I \subseteq \{ 1, \ldots, n \}$, if there are distinct $b_1, \ldots b_n \in M$ such that each $b_i \not \in \scl(a)$ and $\anglebracket{b_1, \ldots, b_n} \in D$, then there is $\anglebracket{b_1, \ldots, b_n} \in D$ such that $b_i \in X$ iff $i \in I$.
\end{defn}

\begin{defn}
	A subset $X$ of $M$ is  \emph{neutral} if for all $a$ in $M$, $\scl(a)$ coincides with $\dcl^{(\mathcal{M},X)}(a)$.
\end{defn}

We define another notion of genericity in this paper.

\begin{defn}\label{cpg-strong}A subset $X$ of $M$ is {\it strongly CP-generic} if whenever $D \subseteq M^n$ is definable and $I \subseteq \{ 1, \ldots, n\}$, if there is an infinite $B \subseteq D$ such that for all $\anglebracket{b_1, \ldots, b_n} \neq \anglebracket{c_1, \ldots, c_n} \in B$, $b_i \neq b_j$ for all $i \neq j$ and $\{ b_1, \ldots, b_n \} \cap \{ c_1, \ldots, c_n \} = \emptyset$, then there is $\anglebracket{b_1, \ldots, b_n} \in D$ such that $b_i \in X$ iff $i \in I$.
\end{defn}

Neutrality was explored in \cite{neutral}. In this article we explore the relationship between CP-genericity and neutrality, answering, in particular, Problem 1.3 from \cite{neutral} which asked if CP-generics are necessarily neutral. The relationship between CP-genericity and strong CP-genericity will be made clear in Section \ref{strong-cp}. Section \ref{main} contains our main results. First, we show that CP-generics exist for all countable models, and moreover, that every countable, recursively saturated model of $\PA$ has a CP-generic which is not neutral, answering Problem 1.3 in \cite{neutral} negatively. In fact, we show a complete failure of neutrality in some cases. We also show a construction of a neutral CP-generic. In Section \ref{cuts-and-classes}, we examine some properties of CP-generics in comparison to properties exhibited by neutral sets. We end, in Section \ref{opens}, with some open questions.

\section{Strong CP-genericity}\label{strong-cp}

In \cite{neutral}, the notion of CP-genericity was introduced, but not explored. In this article, we introduce \emph{strong CP-genericity}. The following results show that strong CP-genericity is equivalent to CP-genericity in the case of recursively saturated models.

\begin{prop}\label{equivalence} Let $\mc{M} \models \PA$ and $X \subseteq M$.
	\begin{enumerate}
		\item If $X$ is strongly CP-generic, then $X$ is CP-generic.
		\item If $\mc{M}$ is recursively saturated and $X$ is CP-generic, then $X$ is strongly CP-generic.
	\end{enumerate}
\end{prop}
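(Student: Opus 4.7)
The plan is to exploit that the conclusions of CP-genericity and strong CP-genericity are identical---a tuple in $D$ with the prescribed $X$-pattern---so only the hypotheses differ. Thus (1) reduces to upgrading the CP-genericity hypothesis (a single tuple of distinct entries outside $\scl(a)$) to the strong CP-genericity hypothesis (an infinite pairwise disjoint family in $D$), while (2) is the reverse upgrade carried out inside a recursively saturated $\mathcal{M}$.

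For (1), assume $X$ is strongly CP-generic and suppose $\langle b_1,\dots,b_n\rangle \in D$ witnesses the CP-genericity hypothesis for some $a$-definable $D \subseteq M^n$ and $I \subseteq \{1,\dots,n\}$. Let $D' \subseteq D$ be the $a$-definable set of tuples with pairwise distinct entries, and consider the $a$-definable set
\[
S = \{k \in M : D' \text{ contains a pairwise disjoint sequence of length } k\},
\]
which contains $1$. Either $S$ contains a nonstandard element, in which case the corresponding internal sequence, read externally, is already the desired infinite pairwise disjoint family, or $S \subseteq \mathbb{N}$ and has a standard maximum $k^\ast$. In the latter case the least (in the canonical well-order on codes) pairwise disjoint sequence $\langle T_1,\dots,T_{k^\ast}\rangle$ in $D'$ is $a$-definable, as is the finite set $F = T_1 \cup \cdots \cup T_{k^\ast}$; since $k^\ast$ is standard, $F$ is standardly finite, so every element of $F$ lies in $\scl(a)$. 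Maximality of $k^\ast$ forces every tuple in $D'$ to meet $F$, so some $b_i \in F \subseteq \scl(a)$, contradicting the hypothesis. Hence the first alternative must hold, and strong CP-genericity applied to the resulting infinite family delivers the conclusion.

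For (2), assume $\mathcal{M}$ is recursively saturated and $X$ is CP-generic; let $D \subseteq M^n$ be defined by $\phi(\bar x, a)$ and let $B \subseteq D$ be an infinite pairwise disjoint family witnessing the strong CP-genericity hypothesis. Consider the type
\[
p(x_1,\dots,x_n) = \{\phi(\bar x, a)\} \cup \{x_i \neq x_j : i \neq j\} \cup \{x_i \neq t(a) : 1 \le i \le n,\ t \text{ a Skolem term}\},
\]
which is recursive in the data of $\phi$ and $a$. Any finite fragment mentions only finitely many Skolem terms $t_1,\dots,t_k$; pairwise disjointness of $B$ ensures each $t_j(a)$ appears in at most one tuple of $B$, so cofinitely many tuples of $B$ satisfy the fragment. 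Recursive saturation then realizes $p$ by some $\langle c_1,\dots,c_n\rangle \in D$ with distinct entries, none in $\scl(a)$, and CP-genericity applied to this tuple produces the desired $X$-patterned tuple.

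The main obstacle in (1) is the standard-maximum case: one needs a canonical finite $a$-definable structure attached to a hypothetical bound on pairwise disjoint sequences, and lex-leastness of a maximum sequence is exactly what converts the combinatorial covering property into algebraicity of some $b_i$. In (2) the delicate step is ensuring the type expressing non-algebraicity of every coordinate is both recursive and finitely satisfiable---the former secured by enumerating Skolem terms and the latter by the combinatorial slack inherent in pairwise disjointness of the infinite family $B$.
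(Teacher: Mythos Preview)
Your proof is correct, and for part (2) it is essentially identical to the paper's argument: both set up the recursive type over the parameter $a$ asserting membership in $D$, pairwise distinctness, and avoidance of every Skolem term in $a$, and both verify finite satisfiability by noting that each of the finitely many forbidden values $t_j(a)$ can occur in at most one tuple of the pairwise disjoint family $B$.

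For part (1) your argument and the paper's differ only in packaging. The paper builds the required infinite pairwise disjoint family $B$ by an external induction: at each stage it picks the least tuple in $D$ with distinct entries avoiding the (standardly finitely many) previously chosen entries, observes that each chosen entry therefore lies in $\scl(a)$, and concludes that the original tuple $\langle b_1,\dots,b_n\rangle$ (whose entries lie outside $\scl(a)$) is always available to witness that the construction does not terminate. You instead internalize the construction: you consider the $a$-definable initial segment $S$ of lengths of internal pairwise disjoint sequences in $D'$, and split on whether $S$ reaches a nonstandard value (yielding the family immediately) or has a standard maximum $k^\ast$ (in which case the lex-least maximal sequence has all entries in $\scl(a)$ and covers $D'$, contradicting the hypothesis on $\langle b_1,\dots,b_n\rangle$). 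Both routes rest on exactly the same observation---that $a$-definably selected tuples have entries in $\scl(a)$ while the witnessing tuple does not---and neither is materially shorter or more general than the other.
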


In general, the converse to (1) is not true. For example, if $\mc{M}$ is prime, then every subset $X \subseteq M$ is CP-generic, although every strong CP-generic is infinite.

\begin{proof}
Suppose $X$ is strongly CP-generic. Let $D \subseteq M^n$ be definable using parameter $a \in M$, and let $I \subseteq \{ 1, \ldots, n \}$. Assume there exist distinct $b_1, \ldots, b_n$ such that $b_i \not \in \scl(a)$ for each $i$ and $\anglebracket{b_1, \ldots, b_n} \in D$. Then one can construct a $B$ as in Definition \ref{cpg-strong}: choose the smallest tuple $\bar{b}_0 = \anglebracket{b_{0, 1}, \ldots, b_{0, n}} \in D$ where the $b_{0, i}$ are pairwise distinct, and inductively, if $\bar{b}_0, \ldots, \bar{b}_{m-1}$ have been chosen, choose $\bar{b}_m = \anglebracket{b_{m, 1}, \ldots, b_{m, n}} \in D$ such that $b_{m, i}$ are all pairwise distinct and all differ from the previously chosen $b_{j, k}$ ($j < m, 1 \leq k \leq n$). For each $m \in \omega$ and $1 \leq k \leq n$, $b_{m, k} \in \scl(a)$, and so at each finite stage there is a tuple distinct from those previously chosen. Therefore $B = \{ \anglebracket{b_{m, 1}, \ldots, b_{m, n}} : m \in \omega \} \subseteq D$ satisfies the hypothesis for $B$ in Definition \ref{cpg-strong}. By strong CP-genericity, then, there is $\anglebracket{b_1, \ldots, b_n} \in D$ such that $b_i \in X$ if and only if $i \in I$.

Now assume $\mc{M}$ is recursively saturated and $X$ is CP-generic. Let $D \subseteq M^n$ be definable using parameter $c \in M$, and suppose it satisfies the hypothesis of Definition \ref{cpg-strong}. Let $I \subseteq \{ 1, \ldots, n \}$. Consider the type 
\begin{align*}
p(x_1, \ldots, x_n) = &\{ \anglebracket{x_1, \ldots, x_n} \in D \} \cup \{ x_i \neq x_j : 1 \leq i, j \leq n, i \neq j \} \cup \\ & \{ t_m(c) \neq x_i : 1 \leq i \leq n, m \in \omega \},
\end{align*}
where the $t_m$ range over all unary $\lpa$ Skolem terms (in some recursive enumeration of such Skolem terms). By assumption, there is an infinite $B \subseteq D$ such that for all distinct $\bar{b}, \bar{c} \in B$, the sets $\{ b_1, \ldots, b_n \}$ and $\{ c_1, \ldots, c_n \}$ are disjoint. This implies that $p(x_1, \ldots, x_n)$ is finitely satisfiable. If $\anglebracket{x_1, \ldots, x_n}$ realizes $p$, then the $x_i$ are pairwise distinct and $x_i \not \in \scl(c)$ for each $i$, satisfying the hypothesis for CP-genericity. Then since $X$ is CP-generic, there are $\anglebracket{x_1, \ldots, x_n} \in D$ such that $x_i \in X$ iff $i \in I$.
\end{proof}

\section{Main Results}\label{main}

The goal of this section is to answer Problem 1.3 from \cite{neutral}: if $X$ is CP-generic, must $X$ necessarily be neutral? We see in Theorem \ref{not-neutral} that the answer is no; moreover, in Corollaries \ref{neutral-fail-total} and \ref{every-subset}, that in fact, neutrality can fail badly in (strong) CP-generics. Lastly, we show in Corollary \ref{neutral-strong-cp} that every countable model of $\PA$ has neutral strong CP-generics.

Many of the proofs in this section use \emph{$\mc{M}$-Cohen generics}, which we define as follows.

\begin{defn}
	Let $\mc{M} \models \PA$. Consider the notion of forcing in $(\mc{M}, \omega)$ whose conditions are functions $p : A \to \{0, 1 \}$, where $A \subseteq M$ is finite. Define $p \trianglelefteq q$ if $q$ extends $p$. Let $G \subseteq M$ be generic for this notion of forcing. Then $X = \{ a \in M : p(a) = 0 \text{ for some } p \in G \}$ is an \emph{$\mc{M}$-Cohen generic}.
\end{defn}

For a review of the terminology of arithmetic forcing, see \cite[Chapter 6]{ks}. The usual forcing and truth lemmas hold; see \cite[Lemma 6.2.6]{ks}. That is: forcing is definable in $(\mc{M}, \omega)$, and for any formula $\theta(\overline{x})$ in the expanded language and any $\overline{m} \in M$, $(\mc{M}, \omega, X) \models \theta(\overline{m})$ if and only if there is $p \in G$ such that $(\mc{M}, \omega) \models (p \Vdash \theta(\overline{m}))$. In particular, $p \Vdash m \in X$ iff $m \in \dom(p)$ and $p(m) = 0$.

The reader should be cautioned here that for nonstandard $\mc{M}$, $\mc{M}$-Cohen generics are not the same as Cohen generics in the sense of \cite[Chapter 6]{ks}. That is, given a model $\mc{M}$, Cohen forcing (in the sense of \cite[Chapter 6]{ks}) is the notion of forcing whose conditions are definable functions $p : [0, m) \to \{ 0, 1 \}$, for $m \in M$. If $G$ is generic for this forcing, then the set $X = \{ a \in M : p(a) = 0 \text{ for some } p \in G \}$ is referred to as a \textit{Cohen generic}.

\begin{lem}\label{cohen-implies-strong}Every $\mc{M}$-Cohen generic is strongly CP-generic.\end{lem}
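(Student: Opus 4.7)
The plan is to prove the lemma by a standard density argument in $\mc{M}$-Cohen forcing. Fix $\mc{M} \models \PA$, an $\mc{M}$-Cohen generic filter $G$, and let $X$ be the resulting subset of $M$. To verify strong CP-genericity, fix a definable set $D \subseteq M^n$ with parameter $a \in M$, a subset $I \subseteq \{1, \ldots, n\}$, and an infinite $B \subseteq D$ satisfying the pairwise-disjointness hypothesis of Definition \ref{cpg-strong}.

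The main step is to consider the set of conditions
\[
\Delta = \{p : \text{there is } \anglebracket{b_1, \ldots, b_n} \in D \text{ with distinct entries in } \dom(p) \text{ and } p(b_i) = 0 \Leftrightarrow i \in I\}.
\]
This $\Delta$ is first-order definable in $(\mc{M}, \omega)$ from $a$ and $I$, so once density is established, genericity of $G$ forces $G \cap \Delta \neq \emptyset$. Any $p \in G \cap \Delta$ supplies a witness tuple $\bar{b} \in D$, and a routine compatibility argument inside the filter (if $p(b_i) = 1$ then no $q \in G$ can satisfy $q(b_i) = 0$, else $p, q$ would be incompatible members of $G$) translates the forcing data directly into $b_i \in X \Leftrightarrow p(b_i) = 0 \Leftrightarrow i \in I$, exactly what strong CP-genericity demands.

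The heart of the argument is therefore showing density of $\Delta$. Given a condition $p_0$, note that $\dom(p_0)$ is finite and the tuples of $B$ are pairwise disjoint, so each element of $\dom(p_0)$ can lie in at most one tuple of $B$; thus only finitely many tuples of $B$ meet $\dom(p_0)$. Since $B$ is infinite, pick some $\anglebracket{b_1, \ldots, b_n} \in B$ with $\{b_1, \ldots, b_n\} \cap \dom(p_0) = \emptyset$, and extend $p_0$ to $p$ on $\dom(p_0) \cup \{b_1, \ldots, b_n\}$ by setting $p(b_i) = 0$ for $i \in I$ and $p(b_i) = 1$ for $i \notin I$. Since $\bar{b} \in B \subseteq D$ already has pairwise distinct entries, $p \in \Delta$ and $p \trianglelefteq p_0$.

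I do not foresee any serious obstacle: the pairwise-disjointness clause in the hypothesis of Definition \ref{cpg-strong} is precisely calibrated to let an infinite $B$ dodge the finite domain of any given condition, and the rest is the standard definability-plus-density template for arithmetic Cohen forcing. The only point worth being careful about is making sure $\Delta$ is formulated as a definable subset of the forcing notion (so that genericity of $G$ applies), which is transparent here since it is defined by a single existential quantifier over $M^n$ using only $a$ and $I$ as parameters.
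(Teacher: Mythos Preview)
Your proposal is correct and follows essentially the same density argument as the paper: both extend an arbitrary condition by picking a tuple from the infinite $B$ that avoids the condition's finite domain and then assigning values according to $I$. One notational slip: under the paper's convention $p \trianglelefteq q$ means $q$ extends $p$, so your final line should read $p_0 \trianglelefteq p$ rather than $p \trianglelefteq p_0$.
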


\begin{proof}
Let $p$ be a condition. Let $D \subseteq M^n$ be a definable set such that there is an infinite $B \subseteq D$ as in Definition \ref{cpg-strong}. We show that for each $I \subseteq \{ 1, \ldots n \}$, there is $q_I \trianglerighteq p$ such that \begin{equation}\label{forcing-cpg-strong}q_I \Vdash \exists x_1 \ldots \exists x_n [\anglebracket{x_1, \ldots, x_n} \in D \wedge \bigwedge\limits_{i \leq n} (x_i \in X \leftrightarrow i \in I)].\end{equation} Since $p : A \to \{ 0, 1 \}$ is finite, there are $x_1, \ldots, x_n \not \in A$ such that $\mc{M} \models \anglebracket{x_1, \ldots, x_n} \in D$. Take $A^\prime = A \cup \{ x_i : 1 \leq i \leq n \}$ and $q_I = p \cup \{ \anglebracket{x_i, 0} : i \in I \} \cup \{ \anglebracket{x_i, 1} : i \not \in I \}$. 

If $X$ and $G$ are as in the definition of $\mc{M}$-Cohen genericity, then for each $D \subseteq M^n$ definable satisfying the hypothesis in Definition \ref{cpg-strong}, and each $I \subseteq \{1, \ldots, n \}$, there is $q_I \in G$ satisfying \eqref{forcing-cpg-strong}. Therefore $X$ is strongly CP-generic.
\end{proof}

The following lemma can be proven using the standard proof that generics exist for countable models.
\begin{lem}\label{cohens-exist}If $\mc{M}$ is countable, then $\mc{M}$-Cohen generics exist.\qed\end{lem}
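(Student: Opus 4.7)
The plan is to apply the standard Rasiowa--Sikorski style argument for the existence of generic filters over countable structures to the notion of forcing just defined. Since $M$ is countable, the partial order of conditions---finite partial functions $p \colon A \to \{0,1\}$ with $A \subseteq M$ finite---is itself countable. More importantly, the language of $(\mc{M},\omega)$ together with parameters from $M$ yields only countably many subsets of this partial order that are definable in $(\mc{M},\omega)$, and so only countably many definable dense sets need to be met.

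First, I would enumerate the definable dense subsets of the forcing as $D_0, D_1, D_2, \ldots$, where density of $D$ means that every condition $p$ admits an extension $q \trianglerighteq p$ with $q \in D$. I would then recursively build an ascending chain $p_0 \trianglelefteq p_1 \trianglelefteq p_2 \trianglelefteq \cdots$ of conditions by setting $p_0 = \emptyset$ and, given $p_n$, using density of $D_{n+1}$ to pick some $p_{n+1} \trianglerighteq p_n$ lying in $D_{n+1}$.

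Finally, let $G = \{q : q \trianglelefteq p_n \text{ for some } n < \omega\}$. By construction $G$ is downward closed under $\trianglelefteq$, any two of its members are compatibly extended by a sufficiently large $p_n$, and $G$ meets every $D_n$; hence $G$ is a generic filter for the forcing. The set $X = \{a \in M : p(a) = 0 \text{ for some } p \in G\}$ is then an $\mc{M}$-Cohen generic as defined. There is essentially no obstacle; the only point worth observing is that \emph{generic} in this arithmetic-forcing setting means meeting every dense set definable in $(\mc{M},\omega)$, rather than every externally dense set, which is precisely why countability of $M$ suffices as a hypothesis.
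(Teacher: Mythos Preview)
Your argument is correct and is precisely the standard Rasiowa--Sikorski construction the paper alludes to. The paper itself gives no proof beyond the sentence that the lemma ``can be proven using the standard proof that generics exist for countable models,'' so your write-up is exactly what was intended.
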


In fact, if $\mc{M}$ is uncountable, there are no $\mc{M}$-Cohen generics.

\begin{cor}\label{existence}For any countable $\mc{M}$, there is $X \subseteq M$ that is strongly CP-generic.\qed\end{cor}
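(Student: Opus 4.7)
The corollary is essentially immediate from the two lemmas that precede it, so my plan is simply to observe the composition. By Lemma~\ref{cohens-exist}, any countable $\mc{M}$ admits an $\mc{M}$-Cohen generic $X$, and by Lemma~\ref{cohen-implies-strong} every such $X$ is strongly CP-generic. So the strategy is: quote Lemma~\ref{cohens-exist} to produce $X$, then quote Lemma~\ref{cohen-implies-strong} to conclude that $X$ has the desired property. There is no combinatorial content that needs to be redone here.

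The only genuinely nontrivial ingredient is Lemma~\ref{cohens-exist}, which the authors already flag as being proved ``by the standard proof that generics exist for countable models.'' If I had to unpack it, I would use the Rasiowa--Sikorski argument: since the conditions are finite partial functions from $M$ to $\{0,1\}$ and the notion of forcing is definable in $(\mc{M},\omega)$, the dense open sets that need to be met (namely, for each formula $\theta$ in the expanded language with parameters from $M$, the set of conditions deciding $\theta$) form a countable family when $\mc{M}$ is countable. Enumerating them as $D_0, D_1, \ldots$ and building a descending chain of conditions $p_0 \trianglelefteq p_1 \trianglelefteq \cdots$ with $p_{k+1} \in D_k$ produces a generic filter $G$, from which the generic set $X = \{ a \in M : p(a) = 0 \text{ for some } p \in G \}$ is extracted as in the definition.

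The main obstacle, such as it is, is purely bookkeeping: one must check that the forcing and truth lemmas for $\mc{M}$-Cohen forcing really do go through in $(\mc{M},\omega)$ in the form needed by Lemma~\ref{cohen-implies-strong}, which is why the authors point the reader to \cite[Chapter~6]{ks} and caution that this notion of forcing is not the same as the ``Cohen forcing'' of that chapter. Once that is granted, there is literally nothing left to prove for the corollary itself beyond citing the two lemmas in sequence, which is presumably why the authors end the statement with \verb|\qed|.
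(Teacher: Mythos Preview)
Your proposal is correct and matches the paper's approach exactly: the corollary is stated with a bare \verb|\qed| because it follows immediately by composing Lemma~\ref{cohens-exist} with Lemma~\ref{cohen-implies-strong}, just as you describe. Your additional unpacking of the Rasiowa--Sikorski argument behind Lemma~\ref{cohens-exist} is accurate but goes beyond what the paper (or the corollary) requires.
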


\begin{lem}\label{strong-above-omega}If $\mc{M}$ is nonstandard and $X, Y \subseteq M$ are such that $X \setminus \omega = Y \setminus \omega$, then $X$ is strongly CP-generic iff $Y$ is.
\end{lem}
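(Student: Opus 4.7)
The plan is to exploit the fact that a tuple $\bar b \in D$ with all entries nonstandard has $b_i \in X \iff b_i \in Y$, by the hypothesis $X \setminus \omega = Y \setminus \omega$, so such a tuple witnesses strong CP-genericity for $X$ and $Y$ simultaneously. By the symmetry of the hypothesis in $X$ and $Y$, it suffices to prove one direction, say that strong CP-genericity of $X$ implies strong CP-genericity of $Y$.

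Given a definable $D \subseteq M^n$, a set $I \subseteq \{1,\dots,n\}$, and an externally infinite pairwise disjoint $B \subseteq D$ as in Definition \ref{cpg-strong} for $Y$, define, for each $c \in M$,
\[
D^c = \{\langle b_1,\dots,b_n\rangle \in D : b_i > c \text{ for all } i \text{ and } b_i \neq b_j \text{ for } i \neq j\}.
\]
The main step is to find a \emph{nonstandard} $c_0$ for which $D^{c_0}$ still has an externally infinite pairwise disjoint family. Once that is done, applying strong CP-genericity of $X$ to the (parametrically) definable set $D^{c_0}$ together with $I$ yields $\bar b \in D^{c_0} \subseteq D$ with $b_i \in X \iff i \in I$; since $b_i > c_0$ is nonstandard for each $i$, we conclude $b_i \in Y \iff i \in I$, giving the required witness for $Y$.

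To produce $c_0$, I would use overspill. Let $\chi(c)$ be the first-order formula (using the usual $\PA$ coding of finite sequences) expressing ``there exist $c$ pairwise disjoint tuples in $D^c$.'' For any standard $c$, pairwise disjointness of $B$ implies at most $c+1$ of its tuples can have an entry in $\{0,\dots,c\}$, so all but finitely many tuples of $B$ lie in $D^c$, and in particular $\mc{M} \models \chi(c)$. The set $\{c : \mc{M} \models \chi(c)\}$ is thus a definable superset of $\omega$, so by overspill it contains a nonstandard $c_0$. The $\mc{M}$-coded sequence of $c_0$ pairwise disjoint tuples witnessing $\chi(c_0)$, indexed externally by $\{1,\dots,c_0\} \supseteq \omega$, supplies the required externally infinite pairwise disjoint family in $D^{c_0}$. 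The main subtlety is this overspill step --- once $c_0$ is in hand, the rest is formal bookkeeping.
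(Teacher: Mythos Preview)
Your proposal is correct and follows essentially the same approach as the paper: restrict $D$ to tuples with all entries above a nonstandard $c$ obtained by overspill, then apply strong CP-genericity of $X$ to the restricted set and use $X\setminus\omega = Y\setminus\omega$ to transfer the witness to $Y$. The paper's proof is terser---it simply invokes overspill without spelling out the formula $\chi(c)$ or the pigeonhole count showing at most $c+1$ tuples of $B$ meet $\{0,\dots,c\}$---but the underlying argument is the same.
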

\begin{proof}
Suppose $X$ is strongly CP-generic. Let $D \subseteq M^n$ and $I \subseteq \{1, \ldots, n\}$ be such that there is an infinite $B \subseteq D$ as in Definition \ref{cpg-strong}. By overspill, there is $c > \omega$ such that \[D^\prime = \{ \anglebracket{x_1, \ldots, x_n} : \mc{M} \models \anglebracket{x_1, \ldots, x_n} \in D \wedge \bigwedge\limits_{1 \leq i \leq n} x_i > c \}\] also has such an infinite subset $B$. By strong CP-genericity applied to $D^\prime$, there is $\anglebracket{b_1, \ldots, b_n} \in D^\prime$ such that $b_i \in X$ iff $i \in I$. Since each $b_i > \omega$, it follows that $b_i \in Y$ iff $i \in I$.
\end{proof}

Note that in the standard model $\mathbb{N}$, the $\mathbb{N}$-Cohen generics are exactly the Cohen generics in the sense of \cite[Chapter 6]{ks}. By Lemma \ref{cohen-implies-strong}, every Cohen generic in the standard model is strongly CP-generic. The converse is false: there are strong CP-generics which are not Cohen generic. Moreover, for every countable $\mc{M}$, there are strong CP-generics which are not $\mc{M}$-Cohen generic.

\begin{prop}For every countable $\mc{M}$, there is a strong CP-generic $X \subseteq \omega$ which is not $\mc{M}$-Cohen generic.
\end{prop}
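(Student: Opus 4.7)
The plan is to exploit Lemma \ref{strong-above-omega}: since strong CP-genericity is insensitive to modifications on $\omega$ in a nonstandard model, I can perturb an $\mc{M}$-Cohen generic on $\omega$ to preserve strong CP-genericity while breaking Cohen genericity. First, I would apply Lemmas \ref{cohens-exist} and \ref{cohen-implies-strong} to fix an $\mc{M}$-Cohen generic $Y \subseteq M$ which is automatically strongly CP-generic. Then I would define $X$ so that $X \setminus \omega = Y \setminus \omega$ while tailoring $X \cap \omega$ to a non-generic pattern (e.g.\ setting $X \cap \omega = \omega$, or any specific recursive subset compatible with the precise formulation). By Lemma \ref{strong-above-omega}, $X$ remains strongly CP-generic.

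Next I would confirm that $X$ is not $\mc{M}$-Cohen generic by exhibiting a dense subset of the $\mc{M}$-Cohen forcing that the canonical filter $G_X = \{p : p \text{ compatible with } X\}$ cannot meet. The natural witness is $F = \{p : \exists n\,(n \in \omega \cap \dom(p) \wedge p(n) = 1)\}$, which is dense since any condition extends by assigning a fresh standard $n$ the value $1$, but no $p \in G_X$ can lie in $F$ when $\omega \subseteq X$, because $p(n) = 1$ forces $n \notin X$ while $n \in \omega \subseteq X$. A symmetric dense set handles the dual choice $X \cap \omega = \emptyset$, so the exact choice of non-generic pattern is flexible.

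The main obstacle, as I see it, is the standard-model case $\mc{M} = \mathbb{N}$, where $\omega = M$ and Lemma \ref{strong-above-omega} is vacuous, so the modification trick has nothing to bite on. Here I would instead build $X$ directly by a modified forcing enumerating only the dense sets of the form $E_{D,I}$ underlying strong CP-genericity (cf.\ the proof of Lemma \ref{cohen-implies-strong}), while deliberately arranging the construction to miss some other arithmetic dense set of the Cohen forcing. Isolating an arithmetic dense set that is genuinely outside the closure of the $E_{D,I}$-requirements---so that it can be safely avoided without interfering with the stage-by-stage satisfaction of strong CP-genericity---is the delicate step I anticipate spending most of the effort on.
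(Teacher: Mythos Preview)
For the nonstandard case your approach and the paper's are essentially identical: both invoke Lemma \ref{strong-above-omega} to alter a strong CP-generic on $\omega$ and then observe that an $\mc{M}$-Cohen generic cannot be so rigid on $\omega$. The paper simply passes from a strong CP-generic $X$ to $X \setminus \omega$ and notes that every $\mc{M}$-Cohen generic must meet $\omega$; you instead force $\omega \subseteq X$ and note that every $\mc{M}$-Cohen generic must omit some standard point. These are dual instances of the same idea, and either works.

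The gap is in the standard case. You have the right framework---meet only the dense sets $E_{D,I}$ coming from Definition \ref{cpg-strong} while deliberately missing some other dense set---but you leave the choice of that dense set open and flag it as the delicate step. The paper supplies the missing witness: every Cohen generic over $\mathbb{N}$ must contain an interval $[n,2n]$ for some $n$, since the conditions forcing this form a dense set. On the other hand a strong CP-generic can be built containing no such interval, because each requirement $(D,I)$ can be satisfied by a tuple from the infinite $B$ whose coordinates are chosen large (or at least chosen to avoid a fixed sparse set hitting every $[n,2n]$, such as the powers of $2$), leaving room to place a point of the complement inside every $[n,2n]$. This is exactly the ``arithmetic dense set genuinely outside the closure of the $E_{D,I}$-requirements'' you were looking for, and once named the construction is routine.
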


\begin{proof}
	If $\mc{M}$ is standard and $X$ is Cohen generic, then there is $n \in \omega$ such that $[n, 2n] \subseteq X$. One confirms this by noticing that the set of conditions which force this is dense. However, one can routinely construct a strong CP-generic which avoids $[n, 2n]$ as a subset for each $n$.
	
	If $\mc{M}$ is nonstandard, then by Lemma \ref{strong-above-omega}, if $X$ is strongly CP-generic, then $X \setminus \omega$ is also strongly CP-generic. However, every $\mc{M}$-Cohen generic must nontrivially intersect $\omega$.
\end{proof}

Next we answer Problem 1.3 from \cite{neutral} in the negative.

\begin{thm}\label{not-neutral}Let $\mc{M}$ be any countable, non-prime model. Then there is $X \subseteq M$ such that $X$ is (strongly) CP-generic but not neutral.
\end{thm}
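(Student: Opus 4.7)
Since $\mc{M}$ is non-prime, fix $b \in M \setminus \scl(\emptyset)$. The plan is to construct a (strongly) CP-generic $X \subseteq M$ in which $b$ is $0$-definable in $(\mc{M}, X)$, so that $b \in \dcl^{(\mc{M},X)}(\emptyset) \setminus \scl(\emptyset)$ and hence $X$ fails to be neutral.

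Fix a $0$-definable injection $f \colon M \times M \to M$, and let $R_b := \{f(b, j) : j \in M\}$. The candidate defining formula is $\varphi(y) := \forall j\, f(y, j) \in X$. Use a modified $\mc{M}$-Cohen forcing $\mc{P}'$ whose conditions $p : A \to \{0, 1\}$ (with $A \subseteq M$ finite) are required to satisfy $p(f(b, j)) = 0$ whenever $f(b, j) \in A$; a generic filter $G \subseteq \mc{P}'$ exists by the countable-dense-sets argument behind Lemma~\ref{cohens-exist}, giving $X := \{y : \exists p \in G\, p(y) = 0\}$. Two density arguments then yield $R_b \subseteq X$ (via the density of $\{p \in \mc{P}' : f(b, j) \in \dom(p)\}$ for each $j$) and, for each $y \in M \setminus \{b\}$, some $j$ with $f(y, j) \notin X$ (using injectivity of $f$: for $y \neq b$, extending $p \in \mc{P}'$ by $p(f(y, j)) = 1$ for fresh $j$ respects the coding constraint, since $f(y, j) \neq f(b, j')$ for any $j'$). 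Thus $\varphi$ defines $\{b\}$ in $(\mc{M}, X)$.

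The main technical step---and the principal obstacle---is verifying (strong) CP-genericity of $X$. For $(D, c, I)$ satisfying the hypothesis of Definition~\ref{cpg} (resp. Definition~\ref{cpg-strong}), extending $p \in \mc{P}'$ to force the existence of $\bar x \in D$ with $x_i \in X \iff i \in I$ requires a $\bar x \in D$ disjoint from $\dom(p)$ with $x_i \notin R_b$ for each $i \notin I$, so that the extension $p' := p \cup \{(x_i, \chi_I(i))\}_i$ remains in $\mc{P}'$. The crucial observation is that, since $f$ is $0$-definable and injective, $y \in R_b$ implies $b \in \scl(y)$; thus if no admissible $\bar x \in D$ existed, then $D$ would be covered by the sets $\{\bar y : y_i \in R_b\}$ for $i \notin I$, and extracting $\bar d_0 := \min_{\mathrm{lex}} D \in \scl(c)^n$ and recovering $b$ from some coordinate $(d_0)_i \in R_b$ would force $b \in \scl(c)$. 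This gives an immediate contradiction when $b \notin \scl(c)$.

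The delicate case is $b \in \scl(c)$ with $\scl(c) \neq M$, where $R_b$ is visible to the parameter $c$ and the CP-genericity hypothesis directly constrains $X$ on a $c$-definable set. I expect this to be the main technical hurdle. One natural resolution is to choose $b$ so that $\scl(b) = M$ when such a generator exists---then $c$ with $b \in \scl(c)$ forces $\scl(c) = M$, making the hypothesis vacuous on any problematic $b$-definable $D$. The non-finitely-generated case requires a more elaborate scheme (e.g., a coding distributed across several marker elements, handled by a back-and-forth enumeration of requirements) together with the infinite disjoint family $B$ from the strong CP-genericity hypothesis to exploit coordinate-wise disjointness in avoiding $R_b$.
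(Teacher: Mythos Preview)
Your construction has a fatal flaw for strong CP-genericity that is more basic than the ``delicate case'' you flag. By design $R_b \subseteq X$, but $R_b$ is an infinite set definable from the single parameter $b$. Taking $n = 1$, $D = R_b$, and $I = \emptyset$ in Definition~\ref{cpg-strong}, the hypothesis is trivially met (any infinite set of distinct elements of $R_b$ serves as $B$), yet the conclusion would require some $x \in R_b$ with $x \notin X$. So your $X$ is never strongly CP-generic, no matter how $b$ is chosen. The same $D$ shows your $X$ is not even CP-generic unless $b$ generates $\mc{M}$: if $\scl(b) \neq M$, pick $j \notin \scl(b)$; then $f(b,j) \in R_b \setminus \scl(b)$ (since $f$ is injective), so the CP-genericity hypothesis for $(D, b, \emptyset)$ is satisfied but the conclusion fails. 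Your scheme therefore yields at best a CP-generic (not strongly) in the finitely-generated case, and you give no argument in the non-finitely-generated case. More generally, any coding that forces an infinite \emph{definable} set to lie entirely inside (or entirely outside) $X$ will destroy strong CP-genericity for exactly this reason.

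The paper's proof sidesteps the obstruction by confining all coding to $X \cap \omega$. The key is Lemma~\ref{strong-above-omega}: modifying a strongly CP-generic set on $\omega$ preserves strong CP-genericity, since by overspill the witnessing tuples for any $(D, I)$ can be taken entirely above $\omega$. One first builds a genuine $\mc{M}$-Cohen generic $X$ with the extra separation property that for all $a \neq a'$ there is $n \in \omega$ with $(a + n \in X) \leftrightarrow (a' + n \notin X)$. Then, fixing an undefinable $a$, one overwrites $X$ on $\omega$: no even standard integer goes in, and $2n+1$ goes in iff $a + n \in X$. The resulting $X'$ agrees with $X$ above $\omega$, hence remains strongly CP-generic; $\omega$ becomes $0$-definable in $(\mc{M}, X')$ via the even integers; and $a$ is then the unique element whose shift-pattern matches the odd standard part. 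The point is that $\omega$ is an infinite region that is \emph{not} $\mc{M}$-definable, so coding there does not create the definable-set obstruction that sinks your approach.
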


\begin{proof}
We proceed by first constructing an $\mc{M}$-Cohen generic with the property that for all $a \neq b$, there is $n \in \omega$ such that $a + n \in X$ iff $b + n \not \in X$. Then, by Lemma \ref{strong-above-omega}, given any $a \not \in \scl(0)$, the set $X^\prime = (X \setminus \omega) \cup \{ 2n + 1 : n \in \omega, a + n \in X \}$ is also strongly CP-generic, and we will see that $a$ is definable in $(\mc{M}, X^\prime)$.

To construct the $\mc{M}$-Cohen generic with the requisite property, enumerate the model as $(a_i : i \in \omega)$ and the dense definable sets in the $\mc{M}$-Cohen forcing in $(\mc{M}, \omega)$ as $(A_i : i \in \omega)$. At stage $2i$, meet $A_i$ with condition $p_{2i}$ (extending $p_{2i-1}$ if $i > 0$). At stage $2i + 1$, let $n \in \omega$ be the least such that $a_j + n \not \in \dom(p_{2i})$ for each $j < i$, and extend $p_{2i}$ to $p_{2i+1}$ such that $p_{2i+1}(a_i + n) = 0$ and $p_{2i+1}(a_j + n) = 1$ for each $j < i$.

Let $X$ be the resulting $\mc{M}$-Cohen generic. Then for all $a \neq b$ nonstandard, there is $n \in \omega$ such that $a + n \in X \iff b + n \not \in X$.

To complete the proof, let $a \not \in \scl(0)$ and let $X^\prime = (X \setminus \omega) \cup \{ 2n + 1 : a + n \in X \}$. By Lemma \ref{strong-above-omega}, $X^\prime$ is strongly CP-generic. Moreover, notice that $\omega$ is $0$-definable in $(\mc{M}, X)$ as 
\begin{equation}\label{omega}
\{ n : (\mc{M}, X) \models \forall i < n (2i \not \in X) \}.
\end{equation} To see this, let $J$ be the set of those $n$ satisfying \eqref{omega}. By definition, $\omega \subseteq J$. If $c > \omega$, consider the set $D = \{ \anglebracket{x_0, x_1} : \mc{M} \models x_0 < x_1 < c \wedge x_1 = 2 \cdot x_0 \}$. Since $c$ is nonstandard, $D$ contains an infinite set $B$ such that if $\anglebracket{b_0, b_1} \neq \anglebracket{c_0, c_1} \in B$, then $b_0, b_1, c_0, c_1$ are all distinct. By strong CP-genericity, there is $\anglebracket{x_0, x_1} \in D$ such that $x_1 \in X$. Then, since $x_1$ is even, $c \not \in J$.

Lastly, $a$ is definable in $(\mc{M}, X^\prime)$ as $x = a$ iff $$(\mc{M}, X^\prime) \models \forall n \in \omega (x + n \in X \leftrightarrow 2n + 1 \in X).\qedhere$$
\end{proof}

We can further modify the above idea to show that there is a strong CP-generic $X \subseteq M$ such that every element of $M$ is definable in $(\mc{M}, X)$. Instead of using the evens and odds, as we did above, take a partition of $\omega$ into countably many uniformly definable disjoint infinite sets $(I_j : j \in \omega)$. For example, let $I_j$ be the powers of the $j$-th prime. Then ensure $I_0 \cap X = \emptyset$, and for each $a_j \in M$, put the $n$-th element of $I_{j+1}$ in $X$ if and only if $a_j + n \in X$. In this way, we obtain the following corollary:

\begin{cor}\label{neutral-fail-total}Let $\mc{M}$ be countable. Then there is a strong CP-generic $X \subseteq M$ such that every element of $M$ is definable in $(\mc{M}, X)$.\qed
\end{cor}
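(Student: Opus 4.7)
The plan is to follow the sketch given in the paragraph preceding the statement, filling in the two definability verifications. If $\mc{M}$ is standard, every element of $M$ is already $0$-definable, and any strong CP-generic (one exists by Corollary \ref{existence}) suffices. Otherwise I assume $\mc{M}$ is countable and nonstandard and enumerate $M\setminus\omega$ as $(a_j : j\in\omega)$. I fix a uniformly $0$-definable partition $(I_j : j\in\omega)$ of the positive integers into infinite sets with increasing enumerations $\iota_j$; concretely, $\iota_j(k) = 2^j(2k+1)$, so each $I_j$ is definable in $\mc{M}$ and reaches into the nonstandard region by overspill. Running the construction from Theorem \ref{not-neutral}, I would first produce an $\mc{M}$-Cohen generic $X$ with the diagonal property that for every $i\neq j$ there is $n\in\omega$ with $a_i + n \in X \iff a_j + n \notin X$. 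Then I would define
\[
X' = (X\setminus\omega) \cup \bigcup_{j\in\omega}\{\iota_{j+1}(n) : n\in\omega,\ a_j + n \in X\},
\]
so $X'\cap I_0 = \emptyset$. Since $X'\setminus\omega = X\setminus\omega$ and $X$ is strongly CP-generic by Lemma \ref{cohen-implies-strong}, Lemma \ref{strong-above-omega} yields that $X'$ is strongly CP-generic.

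The remaining work splits into two definability claims. First, I would show that $\omega$ is $0$-definable in $(\mc{M},X')$ via $\varphi(x) \equiv \forall y\in I_0\,(y<x \to y\notin X')$: the inclusion $\omega\subseteq\{x:\varphi(x)\}$ is immediate from $X'\cap I_0 = \emptyset$, and for $c>\omega$ the definable set $D=\{y\in I_0 : y<c\}$ contains the externally infinite subset $I_0\cap\omega$ (which vacuously satisfies the disjointness hypothesis of Definition \ref{cpg-strong} for $n=1$), so strong CP-genericity of $X'$ with $I=\{1\}$ yields $y\in D\cap X'$, necessarily nonstandard, witnessing the failure of $\varphi(c)$. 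Having $\omega$ definable in $(\mc{M}, X')$, I would then verify each $a_j$ is the unique $x$ satisfying
\[
\psi_j(x) \equiv x\notin\omega \,\wedge\, \forall n\in\omega\,\bigl(\iota_{j+1}(n)\in X' \iff x + n\in X'\bigr),
\]
with satisfaction by $a_j$ immediate from the construction (for nonstandard $a_j + n$, membership in $X$ and $X'$ coincide), and uniqueness following from the diagonal property applied to any competing nonstandard candidate $a_k$. Standard elements of $M$ are already $0$-definable in $\mc{M}$, so every element of $M$ is definable in $(\mc{M}, X')$.

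The step I expect to be the main obstacle is the definability of $\omega$: it requires combining overspill (to know $I_0$ reaches every nonstandard initial segment) with a careful application of strong CP-genericity to a $1$-variable definable set, and one must verify that the ``infinite subset $B$'' demanded by Definition \ref{cpg-strong} is genuinely externally infinite. Once that is pinned down, the formula $\psi_j$ is a routine reinterpretation of the encoding, and the diagonal property required of $X$ is delivered verbatim by the forcing construction in Theorem \ref{not-neutral}.
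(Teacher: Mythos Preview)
Your proposal is correct and follows essentially the same approach as the paper's sketch in the paragraph preceding the corollary: you carry out the indicated modification of Theorem~\ref{not-neutral} with the partition $(I_j)$, and you supply the two verifications (definability of $\omega$ via strong CP-genericity applied to $I_0$ below a nonstandard bound, and uniqueness of each $a_j$ via the diagonal property) that the paper leaves implicit. The only cosmetic differences are your choice of partition $\iota_j(k)=2^j(2k+1)$ in place of prime powers and your enumeration of $M\setminus\omega$ rather than all of $M$, neither of which affects the argument.
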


Note that if $\mc{M}$ is prime, every element of $M$ is already definable, regardless of what $X \subseteq M$ is taken.

By another similar modification to the proof of Theorem \ref{not-neutral}, we find that for every countable $\mc{M}$ and $A \subseteq M$, there is a strong CP-generic $X \subseteq M$ such that $A \in \Def(\mc{M}, X)$. Moreover, there is a kind of uniformity in defining these functions.

\begin{cor}\label{every-subset}There is a formula $\phi(x) \in \lpax$ such that for any countable $\mc{M}$ and any subset $A \subseteq M$, there is a strong CP-generic $X$ such that $\phi$ defines $A$ in $(\mc{M}, X)$.
\end{cor}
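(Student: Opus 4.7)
The plan is to adapt the construction from Corollary \ref{neutral-fail-total} so that one $\lpax$-formula reads off membership in $A$ from $X$ uniformly. Fix a $0$-definable partition $\omega = \bigsqcup_{j \in \omega} I_j$ into infinite sets uniformly definable in $j$ (for instance $I_j = \{p_j^k : k \geq 1\}$, the powers of the $j$-th prime), and let $i_n^{(j)}$ denote the $n$-th element of $I_j$, a $0$-definable function of $j,n$. Since $\omega$ is $\emptyset$-definable inside $(\mc{M}, X)$ for every strongly CP-generic $X$ by formula \eqref{omega}, the intended formula is
\[
\phi(x) := \exists j \in \omega \Bigl[\forall n \in \omega \bigl(x + n \in X \leftrightarrow i_n^{(j+1)} \in X\bigr) \wedge i_j^{(0)} \in X\Bigr],
\]
a fixed $\lpax$-formula depending on neither $\mc{M}$ nor $A$.

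Given a countable $\mc{M}$ and $A \subseteq M$, I would enumerate $M = (a_j)_{j \in \omega}$, build an $\mc{M}$-Cohen generic $X$ having the distinguishing property used in Theorem \ref{not-neutral} (for each distinct $a,b \in M$ some $n \in \omega$ satisfies $a + n \in X \leftrightarrow b + n \notin X$), and produce $X'$ by leaving $X' \setminus \omega = X \setminus \omega$ and redefining $X' \cap \omega$ through the two rules $i_n^{(j+1)} \in X' \leftrightarrow a_j + n \in X$ (copying the $\omega$-pattern of $X$ around $a_j$ into $I_{j+1}$) and $i_j^{(0)} \in X' \leftrightarrow a_j \in A$ (placing the bit of $A$ for $a_j$ into $I_0$). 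Lemma \ref{cohen-implies-strong} ensures $X$ is strongly CP-generic, and Lemma \ref{strong-above-omega} then gives that $X'$ is too. For nonstandard $a_j$ the set $a_j + \omega$ is disjoint from $\omega$ so $X$ and $X'$ agree there; the pattern-copy clause exhibits $j$ as a witness to the existential inside $\phi(a_j)$, and the distinguishing property of $X$ forces $j$ to be the unique such witness, whence $(\mc{M}, X') \models \phi(a_j)$ iff $i_j^{(0)} \in X'$ iff $a_j \in A$.

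The main obstacle will be the standard case, because when $a_j \in \omega$ the set $a_j + \omega \subseteq \omega$ lies entirely inside the overwritten region, so the pattern of $X'$ around $a_j$ has been replaced by the coding and need not uniquely match any $I_{j'+1}$. To handle this I plan to enumerate $M$ so that standard elements occupy a canonical initial segment of indices and to split off one additional $0$-definable slice, say $I_{-1}$, from the partition to carry a direct encoding of $A \cap \omega$, augmenting $\phi$ with a disjunct of the form $x \in \omega \wedge i_x^{(-1)} \in X$ for the standard case. Verifying that the various coding constraints on $\omega$ remain simultaneously satisfiable (each location of $\omega$ is assigned exactly one value by exactly one rule) and that strong CP-genericity survives (only $X \cap \omega$ has been altered, so Lemma \ref{strong-above-omega} applies) then completes the argument.
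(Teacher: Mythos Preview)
Your overall strategy---partition $\omega$ into uniformly definable slices, copy the $\omega$-pattern of $X$ around each element into its own slice, and handle $A\cap\omega$ by a direct bit-encoding in a reserved slice---is essentially the paper's. But there is a genuine error in your setup: you assert that formula~\eqref{omega} defines $\omega$ in $(\mc{M},X)$ for \emph{every} strongly CP-generic $X$. That is false. In the proof of Theorem~\ref{not-neutral} the inclusion $\omega\subseteq J$ holds only because that particular $X'$ was built so that no even standard number lies in it; strong CP-genericity by itself yields only the other inclusion $J\subseteq\omega$. In your construction the slice $I_0$ (powers of $2$) stores the bits $i_j^{(0)}\in X'\leftrightarrow a_j\in A$, so as soon as $A\neq\emptyset$ some even standard number enters $X'$ and \eqref{omega} no longer defines $\omega$. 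Since your $\phi$ quantifies over $\omega$, without a uniform $\lpax$-definition of $\omega$ it is not a legitimate $\lpax$-formula at all.

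The repair is exactly what the paper does: dedicate one slice (the paper's $I_0$) to be kept disjoint from $X'$, and define $\omega$ as the set of $n$ whose first $n$ elements of that slice all miss $X'$; then shift your other slices accordingly. Two smaller points where the paper differs: it enumerates $A$ rather than $M$, so only elements of $A\setminus\omega$ receive pattern-slices and membership reduces to ``some slice matches'' without a separate bit; and it treats the standard model $\mathbb{N}$ as a separate case, since Lemma~\ref{strong-above-omega} needs $\mc{M}$ nonstandard---in your construction $X'\cap\omega$ is entirely overwritten by the coding, so for $\mc{M}=\mathbb{N}$ nothing guarantees $X'$ remains strongly CP-generic.
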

	
\begin{proof}
	Let $\mc{M}$ and $A$ be given. The formula $\phi(x)$ (and the uniformity of it) will be clear as part of the construction of $X$.
	
	In the standard model, one can routinely construct a Cohen generic $X$ such that $A \in \Def(\mathbb{N}, X)$, similar to the proof of \cite[Theorem 6.2.11]{ks}. Using Lemma \ref{cohen-implies-strong}, $X$ is also strongly CP-generic.
	
	If $\mc{M}$ is nonstandard, we modify the construction in Theorem \ref{not-neutral}. Start by constructing a strong CP-generic $X$ with the property that for all $a \neq b$, there is $n \in \omega$ such that $a + n \in X$ iff $b + n \not \in X$. Note that the following construction can be done for any nonstandard $\mc{M}$, while Theorem \ref{not-neutral} only applies to non-prime models.
	
	Fix an enumeration of $A$ in order type $\omega$ as $(a_n : n \in \omega)$. Partition $\omega$ into definable, disjoint, infinite sets $I_0, I_1, I_2, \ldots$.  Define $X^\prime$ as follows: for $c > \omega$, $c \in X$ iff $c \in X^\prime$, so that $X^\prime$ is strongly CP-generic by Lemma \ref{strong-above-omega}. For $n \in I_0$, ensure $n \not \in X^\prime$ so that $\omega$ is definable (as above). For $n \in A \cap \omega$, put the $n$-th element of $I_1$ in $X^\prime$. Then for $x \in A \setminus \omega$, if $x = a_n$, put the $m$-th element of $I_{n+2}$ in $X^\prime$ if and only if $x + m \in X^\prime$.
	
	Now $A$ is definable in $(\mc{M}, X^\prime)$ as $x \in A$ if $x \in \omega$ and the $x$-th element of $I_1$ is in $X^\prime$, or $x \not \in \omega$ and there is $n \in \omega$ such that for all $m \in \omega$, $x + m \in X^\prime$ if and only if the $m$-th element of $I_{n+2}$ is in $X^\prime$.
	
	One checks that the definition of $A$ can be made uniform by noticing that there is a statement true in $(\mc{M}, X)$ (for all countable$\mc{M}$ and strong CP-generics constructed above) iff $\mc{M}$ is standard.
\end{proof}	

\begin{cor}Every countable $\mc{M}$ has $2^{\aleph_0}$ distinct strongly CP-generic subsets.\qed\end{cor}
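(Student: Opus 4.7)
The plan is to deduce this almost immediately from Corollary \ref{every-subset}, whose uniformity statement is exactly the ingredient needed. Let $\phi(x) \in \lpax$ be the formula furnished by that corollary. For every subset $A \subseteq M$, pick a strong CP-generic $X_A \subseteq M$ such that $\phi$ defines $A$ in $(\mc{M}, X_A)$, that is, $A = \{ x \in M : (\mc{M}, X_A) \models \phi(x) \}$. This is possible by Corollary \ref{every-subset}.

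I would then observe that the assignment $A \mapsto X_A$ is injective: if $A \neq A'$ were to yield $X_A = X_{A'}$, then the interpretations of $\phi$ in $(\mc{M}, X_A)$ and $(\mc{M}, X_{A'})$ would agree, forcing $A = A'$, a contradiction. Since any model of $\PA$ is infinite and $\mc{M}$ is countable, $|M| = \aleph_0$ and hence $|\mathcal{P}(M)| = 2^{\aleph_0}$. Therefore the family $\{ X_A : A \subseteq M \}$ contains $2^{\aleph_0}$ pairwise distinct strongly CP-generic subsets of $M$. The reverse inequality is automatic, since every strong CP-generic is a subset of $M$ and $|\mathcal{P}(M)| = 2^{\aleph_0}$.

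There is no real obstacle here: the full content of the statement has already been absorbed into the uniform-formula conclusion of Corollary \ref{every-subset}, and the only remaining work is the one-line injectivity argument together with the cardinality count. If one wanted an alternative route avoiding the uniformity clause, one could instead enumerate a tree of $\mc{M}$-Cohen conditions of height $\omega$ with $2^{\aleph_0}$ branches, at each node splitting into two incompatible extensions (using a single density requirement that can be met in two independent ways), and verify that each branch produces a distinct $\mc{M}$-Cohen generic, whence a distinct strong CP-generic by Lemma \ref{cohen-implies-strong}; but the corollary-based proof is shorter and fits more naturally into the flow of the section.
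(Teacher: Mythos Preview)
Your proposal is correct and matches the paper's intended approach: the corollary is stated with a bare \qed\ immediately after Corollary~\ref{every-subset}, so the implicit proof is exactly the injectivity-plus-cardinality argument you give. One small remark: the uniformity of $\phi$ is convenient but not strictly necessary, since even the non-uniform version (every $A$ lies in $\Def(\mc{M},X)$ for some strong CP-generic $X$) would yield the result by counting formulas; still, your use of uniformity is the cleanest route.
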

	
Conversely, there are neutral sets which are CP-generic. We again use $\mc{M}$-Cohen generics to establish this. As seen in Lemma \ref{cohen-implies-strong}, $\mc{M}$-Cohen generics are strongly CP-generic. Here we see that they are also neutral.

\begin{thm}\label{cohen-is-neutral}For any countable $\mc{M}$, every $\mc{M}$-Cohen generic is neutral.\end{thm}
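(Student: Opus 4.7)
My plan is to prove the elementary substructure property $(\scl(a), X \cap \scl(a)) \preceq (\mc{M}, X)$ for every $a \in M$; this immediately yields $\dcl^{(\mc{M}, X)}(a) \subseteq \scl(a)$ since $a \in \scl(a)$, and the reverse inclusion is automatic. The elementarity is established via the Tarski--Vaught criterion applied to $\lpax$-formulas.

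To verify Tarski--Vaught, I would prove the following two clauses by simultaneous external induction on $\lpax$-formulas $\psi$, for every $\bar c \in \scl(a)$ and every condition $r \in P$: (1) \emph{restriction invariance}: $r \Vdash \psi(\bar c)$ if and only if $r \restriction \scl(a) \Vdash \psi(\bar c)$; and (2) \emph{reflection of witnesses}: if $r \Vdash \exists x\, \psi(x, \bar c)$, then some $r' \geq r$ forces $\psi(d, \bar c)$ for some $d \in \scl(a)$. The atomic case uses that $\lpa$-terms with parameters in $\scl(a)$ evaluate into $\scl(a)$ (since $\scl(a) \preceq \mc{M}$) and that witnesses for atomic $X$-existentials can be placed freshly in $\scl(a)$. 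The boolean cases reduce to the observation that $P$-extensions of $r \restriction \scl(a)$ restrict onto $P_a$-extensions, where $P_a := \{q \in P : \dom(q) \subseteq \scl(a)\}$, together with the inductive hypothesis. In the quantifier step, (2) for $\exists x\, \psi(x, \bar c)$ is obtained by unfolding the forcing definition of $\exists$ and applying (2) at $\psi$ to relocate the witness into $\scl(a)$; and (1) for $\exists x\, \psi$ follows by combining (2) (to restrict attention to witnesses in $\scl(a)$) with (1) applied to the sub-formulas $\psi(d, \bar c)$ for $d \in \scl(a)$.

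Given the joint claim, the theorem is nearly immediate. Suppose $b \in \dcl^{(\mc{M}, X)}(a)$, witnessed by some $\phi(x, y) \in \lpax$ with $b$ the unique element satisfying $\phi(x, a)$ in $(\mc{M}, X)$. By the forcing/truth lemma there is $p \in G$ with $p \Vdash \phi(b, a) \wedge \forall y(\phi(y, a) \to y = b)$, so in particular $p \Vdash \exists x\, \phi(x, a)$. Applying clause (2) to $\phi(x, a)$ produces $r \geq p$ forcing $\phi(d, a)$ for some $d \in \scl(a)$; extending to some $r \in G$ by genericity, we get $(\mc{M}, X) \models \phi(d, a)$, and the forced uniqueness then gives $d = b$, so $b \in \scl(a)$.

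The main obstacle is the quantifier step of the joint induction, which demands a careful interplay between clauses (1) and (2) to handle potential witnesses outside $\scl(a)$ and to confirm that the sub-forcing $P_a$ adequately captures the relevant forcing behavior. The finite nature of Cohen conditions and the elementarity of $\scl(a)$ in $\mc{M}$ are essential for carrying out the requisite bookkeeping.
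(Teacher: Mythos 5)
Your plan rests on two unproved clauses, and the one you yourself flag as ``the main obstacle'' --- the quantifier step --- is exactly where the entire mathematical content of the theorem lives, so the proposal as written has a genuine gap rather than a routine verification left to the reader. Concretely: in the witness-reflection clause you must pass from a condition forcing $\psi(d,\bar c)$ with $d\notin\scl(a)$ to one forcing $\psi(d',\bar c)$ with $d'\in\scl(a)$. Your inductive hypotheses do not reach this instance, because restriction invariance is only available for formulas whose parameters lie in $\scl(a)$, and $\psi(d,\bar c)$ has the parameter $d$ outside $\scl(a)$. Nor is there any homogeneity or automorphism argument to relocate the witness: $\scl(a)$ is pointwise fixed by every automorphism of $\mc{M}$ fixing $a$, so no symmetry of the forcing moves $d$ into $\scl(a)$ while preserving $\bar c$. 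Saying that ``$\psi(x,\bar c)$ cannot pin down a particular $d\notin\scl(a)$'' is precisely the neutrality statement you are trying to prove, so the argument is circular at its key step. The paper resolves exactly this point by a different device: a combinatorial lemma about compatibility of equal-sized conditions (Lemma \ref{infcompat}), which shows that if the forcing singled out a witness outside the Skolem closure, one could find two compatible conditions forcing contradictory uniqueness statements; the residual case is handled by Kanovei's theorem that $\dcl^{(\mc{M},\omega)}=\scl$, which you would also need, since the forcing relation is definable only in $(\mc{M},\omega)$, not in $\mc{M}$.

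A secondary concern is that you aim at a statement substantially stronger than the theorem, namely $(\scl(a),X\cap\scl(a))\preceq(\mc{M},X)$. The theorem only requires $\dcl^{(\mc{M},X)}(a)\subseteq\scl(a)$. Your elementarity claim additionally demands, for instance, that $X\cap\scl(a)$ be unbounded in $\scl(a)$ and that every $X$-definable infinite set with parameters in $\scl(a)$ meet $\scl(a)$; establishing these requires exploiting that the dense sets live in $(\mc{M},\omega)$ (e.g.\ the $\omega$-indexed initial segment of a definable set is $(\mc{M},\omega)$-definable and hence met by the generic), which your outline never uses. None of this is impossible, but it is not supplied, and the theorem does not follow until the witness-relocation step is actually carried out.
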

\begin{proof}
Let $X$ be an $\mc{M}$-Cohen generic and let $G$ be a corresponding generic set of conditions, so that $X = \{ m : p(m) = 0 \text{ for some } p \in G\}$. By Lemma \ref{cohen-implies-strong}, it is strongly CP-generic.
 
We show that $\dcl^{(\mc{M}, \omega, X)} = \dcl^{(\mc{M}, \omega)}$. By Kanovei \cite{kan}, generalized in \cite[Theorem 8.4.7]{ks}, the $\dcl$ relation in $(\mc{M}, \omega)$ is identical to that in $\mc{M}$, which shows that $X$ is neutral. Before we show this, we first show a lemma about compatibility of conditions, which will be critical for various stages of our proof.

\begin{lem}\label{infcompat}
Let $\{ X_i : i \in \omega \}$ be a family of infinite sets of conditions. If for all $i, j \in \omega$ and $p \in X_i, q \in X_j$, $|p| = |q|$, then there are $i \neq j \in \omega$, with $p \in X_i$, $q \in X_j$, and $p \neq q$ such that $p$ and $q$ are compatible.
\end{lem}

Note that the $X_i$ need not be pairwise distinct.

\begin{proof}
Let $n$ be the cardinality of the domain of any condition in (any of the) $X_i$. We prove this by induction on $n$.

If $n = 0$, there are no such $X_i$, since there is only one condition of whose domain is empty (the empty condition). If $n = 1$, the Lemma holds by pigeonholing.

Inductively suppose the Lemma holds for all collections $\{ Y_j : j \in \omega \}$ such that $|p| < n$ for each $p$ in (any of the) $Y_j$. Fix $p \in X_0$. If $p$ is not compatible with any $q \neq p$ for all $q \in \bigcup\limits_{i \geq 1} X_i$, then there is $a \in \dom(p)$ such that there are infinitely many $j \in \omega$, and infinitely many $q \in X_j$ with $a \in \dom(q)$ but $p(a) \neq q(a)$. Without loss of generality, assume $p(a) = 0$, so for all such $q$, $q(a) = 1$.

For such a $q$, define $q^*$ as $q \setminus \{ \anglebracket{a, 1}\}$. Let $Y_j$ be the $j$-th set in the collection $\{ X_i : i \in \omega \}$ such that there are infinitely many $q$ with $q(a) = 1$. Let $Y_j^* = \{ q^* : q \in Y_j, q(a) = 1 \}$. Then $\{ Y_j^* : j \in \omega \}$ satisfies the inductive hypothesis, and so there are $j_0$ and $j_1$, and $q_0^* \in Y_{j_0}^*$, $q_1 \in Y^*_{j_1}$ such that $q_0^*$ and $q_1^*$ are compatible. Then $q_0 = q_0^* \cup \anglebracket{a, 1}$ and $q_1 = q_1^* \cup \anglebracket{a, 1}$ are also compatible.
\end{proof}

Now we return to the proof that $\dcl^{(\mc{M}, \omega, X)} = \dcl^{(\mc{M}, \omega)}$. Suppose $a, b \in M$ are such that $(\mc{M}, \omega, X) \models \forall x (\phi(x, b) \leftrightarrow x = a)$ for some $\phi$ in the expanded language. There is $p \in G$ such that \begin{equation}\label{forcdcl}(\mc{M}, \omega) \models p \Vdash [\forall x (\phi(x, b) \leftrightarrow x = a)].\end{equation} Let $p$ be such that it satisfies \eqref{forcdcl} and $|p|$ is minimal. Let $Y = \{ q : |q| = |p| \text{ and } q \Vdash [\forall x (\phi(x, b) \leftrightarrow x = a)] \}$. We consider the two cases of whether $Y$ is finite or $Y$ is infinite.

If $Y$ is finite, then $p \in \dcl^{(\mc{M}, \omega)}(a, b)$. Since $\omega$ is neutral, then $p \in \scl(a, b)$, and so there are $n \in \omega,$ Skolem terms $t_0, \ldots, t_{n-1}$, and $\sigma : [0, n-1] \to \{ 0, 1 \}$ such that $p(t_i(a, b)) = \sigma(i)$ for each $i < n$. Let $p(x)$ be the finite function defined by $t_i(x, b) \mapsto \sigma(i)$ for $0 \leq i < n$, so that $p = p(a)$. Now consider the set $B = \{ c : (\mc{M}, \omega) \models p(c) \Vdash \forall x [ (\phi(x, b) \leftrightarrow x = c)]\}$. Clearly $a \in B$, and so if $B$ is finite then $a \in \dcl^{(\mc{M}, \omega)}(b)$. If $B$ is infinite, let $Z = \{ p(c) : c \in B \}$  and apply Lemma \ref{infcompat} to the collection $\{ X_i : i \in \omega \}$ where each $X_i = Z$. We obtain compatible conditions $p(c_1) \neq p(c_2) \in Z$. But in $(\mc{M}, \omega)$, \[p(c_1) \cup p(c_2) \Vdash \forall x [\phi(x, b) \leftrightarrow x = c_1] \wedge \forall x [\phi(x, b) \leftrightarrow x = c_2], \] which is impossible.

If $Y$ is infinite, for each $c \in M$ let $X_c = \{ q : |q| = |p| \text{ and } q \Vdash [\forall x (\phi(x, b) \leftrightarrow x = c)]\}$. If there are only finitely many $c$ such that $X_c$ is infinite, then $a \in \dcl^{(\mc{M}, \omega)}(b)$, so assume that there are infinitely many such $c$. Applying Lemma \ref{infcompat}, there are $c_1 \neq c_2$ with $p \in X_{c_1}, q \in X_{c_2}$ and $p$ and $q$ are compatible. But then $p \cup q \Vdash [\forall x(\phi(x, b) \leftrightarrow x = c_1)] \wedge [\forall x (\phi(x, b) \leftrightarrow x = c_2)]$, which is impossible.
\end{proof}

Combining Lemma \ref{cohen-implies-strong}, Lemma \ref{cohens-exist}, and Theorem \ref{cohen-is-neutral}, we obtain the following:

\begin{cor}\label{neutral-strong-cp}Every countable $\mc{M}$ has a neutral, strong CP-generic.\qed\end{cor}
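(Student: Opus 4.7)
The plan is to extract this directly from the three results cited in the statement. Given a countable model $\mc{M} \models \PA$, I would first invoke Lemma \ref{cohens-exist} to produce an $\mc{M}$-Cohen generic subset $X \subseteq M$. The existence is guaranteed there by the standard countable-genericity argument: enumerate the (countably many) dense definable sets of conditions in $(\mc{M}, \omega)$ and build a descending chain of conditions meeting each one in turn, then let $G$ be the filter they generate and set $X = \{ m : p(m) = 0 \text{ for some } p \in G \}$.

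Once such an $X$ is in hand, neutrality and strong CP-genericity both come for free. Lemma \ref{cohen-implies-strong} asserts that every $\mc{M}$-Cohen generic is strongly CP-generic, and Theorem \ref{cohen-is-neutral} asserts that every $\mc{M}$-Cohen generic (for countable $\mc{M}$) is neutral. So the same $X$ witnesses both properties simultaneously, giving the desired neutral strong CP-generic.

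There is essentially no obstacle here; the work has already been done in the preceding results. If anything, the only thing worth remarking on in the write-up is that the three lemmas apply to a single object, so that one does not need to construct two different sets to witness neutrality and strong CP-genericity separately. Because the statement has already been flagged as an immediate corollary (indicated by the \verb|\qed| on the same line as the statement), I would in fact leave it as a one-line remark of the form: ``Let $\mc{M}$ be countable. By Lemma~\ref{cohens-exist} there is an $\mc{M}$-Cohen generic $X \subseteq M$; by Lemma~\ref{cohen-implies-strong} it is strongly CP-generic, and by Theorem~\ref{cohen-is-neutral} it is neutral.''
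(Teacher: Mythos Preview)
Your proposal is correct and matches the paper's approach exactly: the paper states this corollary immediately after noting that it follows by combining Lemma~\ref{cohen-implies-strong}, Lemma~\ref{cohens-exist}, and Theorem~\ref{cohen-is-neutral}, and gives no further proof. Your one-line write-up is precisely what is intended.
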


\section{Cuts and Classes}\label{cuts-and-classes}

In every model $\mc{M}$, the standard cut $\omega$ is neutral. This is not true in general for CP-generics, per the following result. In the following, an extension $\mc{N} \prec \mc{M}$ is called \emph{superminimal} (see \cite[Section 2.1.2]{ks}) if whenever $b \in M \setminus N$, then $\scl(b) = M$.

\begin{prop}\label{unbd}For any $\mc{M}$, the following are equivalent:
	
	\begin{enumerate}
		\item\label{bounded-cpg} $\mc{M}$ has a bounded CP-generic subset $X \subseteq M$.
		\item\label{suff-large-gen} All sufficiently large $b \in M$ generate $\mc{M}$.
		\item\label{supermin-end} $\mc{M}$ is prime or is a superminimal elementary end extension of some $\mc{N} \prec \mc{M}$.
	\end{enumerate}
\end{prop}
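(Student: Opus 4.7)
The plan is to establish the cycle $(1) \Rightarrow (2) \Rightarrow (3) \Rightarrow (1)$.

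The implication $(1) \Rightarrow (2)$ admits a one-line application of CP-genericity. Given a bounded CP-generic $X \subseteq [0,c]$ and any $a \geq c$, take $D(y) = \{y \in M : y > a\}$ (definable from the parameter $a$) and apply Definition~\ref{cpg} with $n = 1$ and $I = \{1\}$: if some $b > a$ were outside $\scl(a)$, there would exist $b \in D \cap X$, impossible since $b > a \geq c$ and $X \subseteq [0,c]$. So $(a,\infty) \subseteq \scl(a)$ for every $a \geq c$. Then for any $d \in M$ the element $d + a + 1$ lies in $(a,\infty) \subseteq \scl(a)$, so writing $d + a + 1 = s(a)$ for a $0$-definable Skolem term $s$ gives $d = s(a) - a - 1 \in \scl(a)$. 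Hence $\scl(a) = M$ for every $a \geq c$, which is exactly (2).

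For $(3) \Rightarrow (1)$, the prime case is immediate: since $\scl(a) = M$ for every $a \in M$, the hypothesis of Definition~\ref{cpg} is vacuous and $X = \emptyset$ is a (trivially bounded) CP-generic. In the superminimal case with $\mc{N}$ a proper elementary initial segment, $N$ is bounded in $M$ by any $b_0 \in M \setminus N$ (by the end-extension property), so any $X \subseteq N$ is automatically bounded. I would construct such $X$ via a variant of $\mc{M}$-Cohen forcing whose conditions $p : A \to \{0,1\}$ have finite domain $A \subseteq N$. The CP-genericity check splits on the parameter $a$: if $a \in M \setminus N$, superminimality gives $\scl(a) = M$ and the hypothesis is vacuous; if $a \in N$, one uses $\mc{N} \prec \mc{M}$ to reflect the witnessing data into $N$ and extends a condition by choosing $\bar{b}' \in D$ whose coordinates indexed by $I$ lie in $N$ (forced into $X$) and whose remaining coordinates lie in $M \setminus N$ (automatically outside $X \subseteq N$).

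The main obstacle is $(2) \Rightarrow (3)$. Assuming $\scl(b) = M$ for every $b \geq c$, the model $\mc{M} = \scl(c)$ is countable. If $\scl(\emptyset) = M$, then $\mc{M}$ is prime. Otherwise, I would let $\mc{N}$ be the union of all proper elementary initial segments of $\mc{M}$; an elementary-chain argument shows this is itself an elementary initial segment of $\mc{M}$. Its properness follows from (2): were $\mc{N} = \mc{M}$, then $c$ itself would lie in some proper elementary initial segment $\mc{K} \prec \mc{M}$, forcing $\scl(c) \subseteq K \subsetneq M$ and contradicting (2). The delicate step is superminimality: given $b \in M \setminus N$ with $\scl(b) \neq M$, one wants to derive a contradiction by promoting $\scl(b)$ to an elementary initial segment of $\mc{M}$ that strictly contains $\mc{N}$, violating the maximality of $\mc{N}$. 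Since elementary substructures of $\mc{M}$ are not automatically initial segments, this promotion step requires extra input --- either a Gaifman-style splitting argument or direct exploitation of the (2)-consequence that every sufficiently large element of $M$ is a generator --- to force the downward closure of $\scl(b)$ to coincide with an elementary substructure. I expect this promotion to be the main technical difficulty of the proof.
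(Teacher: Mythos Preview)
Your $(1)\Rightarrow(2)$ is fine and matches the paper. There are, however, two real problems.

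For $(3)\Rightarrow(1)$, your plan in the superminimal case is to find $\bar b'\in D$ whose $I$-coordinates lie in $N$ and whose remaining coordinates lie in $M\setminus N$. No such tuple need exist: take $D=\{(x,x{+}1):x\in M\}$; since $N$ is an initial segment closed under successor, every tuple in $D$ has both coordinates in $N$ or both in $M\setminus N$. The paper avoids this by keeping the entire witnessing tuple inside $N$: given the current finite sets $A_i$ (positives) and $B_i$ (negatives), take the \emph{least} $\langle x_1,\ldots,x_n\rangle\in D$ with all $x_j\notin A_i\cup B_i$; this tuple is definable from $a\in N$ together with finitely many elements of $N$, hence lies in $N^n$ by $\mc N\prec\mc M$, and one then places $x_j$ into $A_{i+1}$ or $B_{i+1}$ according to $j\in I$ or not. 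Your own forcing conditions $p:A\to\{0,1\}$ already allow you to record $p(x_j)=1$ for $j\notin I$, so the fix is simply to use that rather than trying to push those coordinates above $N$.

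For $(2)\Rightarrow(3)$, you are making the hard step harder than it needs to be. The paper does not build $\mc N$ as a union of initial segments and then struggle to verify superminimality; instead it fixes any generator $b$ and sets $K=\{a\in M:b\notin\scl(a)\}$, which is exactly the set of non-generators. Closure of $K$ under Skolem terms is a one-line Tarski--Vaught check (if $a\in K$ and $c\in\scl(a)$ then $\scl(c)\subseteq\scl(a)\not\ni b$), so $\mc K\prec\mc M$; by (2) the set $K$ is bounded, and since every $a\in M\setminus K$ already satisfies $\scl(a)=M$, superminimality is immediate from the definition of $K$---there is nothing left to ``promote''. That $K$ is in fact an initial segment then drops out (e.g.\ via Gaifman splitting: if $K\prec_{cf}K'\prec_{end}\mc M$ and $a'\in K'\setminus K$, then $\scl(a')=M\subseteq K'$, impossible since $K'$ is bounded). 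Your maximal-initial-segment construction can be made to work, but it inverts the natural order of the argument: with the paper's choice of $K$, superminimality is the trivial part, not the ``main technical difficulty''.
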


One notes here that conditions \eqref{suff-large-gen} and \eqref{supermin-end} both imply that $\mc{M}$ is countable.

\begin{proof}
	The implication $\eqref{supermin-end} \implies \eqref{suff-large-gen}$ is clear from definitions. To show $\eqref{suff-large-gen} \implies \eqref{supermin-end}$, suppose \eqref{suff-large-gen} holds. Let $b \in M$ be such that $\scl(b) = \mc{M}$. Let $K$ be the set of those $a \in M$ such that $b \not \in \scl(a)$. Then by \eqref{suff-large-gen}, either $K = \emptyset$ or $K$ is a proper cut of $\mc{M}$. If $K = \emptyset$, then $\mc{M} = \scl(0)$. If $K \neq \emptyset$, then we claim that $\mc{K}$ is an elementary submodel of $\mc{M}$, and $\mc{M}$ is a superminimal elementary end extension of $\mc{K}$. To see this, suppose $\mc{M} \models \exists x (\phi(x, a))$, for some $a \in K$. Then there is $c \in \scl(a)$ such that $\mc{M} \models \phi(c, a)$. Then since $\scl(c) \subseteq \scl(a)$ and $b \not \in \scl(a)$, it follows that $b \not \in \scl(c)$ and so $c \in K$. Moreover, for any $a \in M \setminus K$, $\scl(a) = \mc{M}$, and so $\mc{M}$ is a superminimal extension of $\mc{K}$.
	
	Next we show $\eqref{bounded-cpg} \implies \eqref{suff-large-gen}$. Suppose $X$ is bounded and is CP-generic. Let $b > X$. If $\scl(b) \neq \mc{M}$, then there is $x \not \in \scl(b)$ such that $x > b$. Then by CP-genericity,
	\[(\mathcal{M}, X) \models \exists x (x > b \wedge x \in X),\]
	immediately contradicting the assertion that $X$ is bounded above by $b$.
	
	Finally we show $\eqref{supermin-end} \implies \eqref{bounded-cpg}$. If $\mc{M}$ is prime, then every $X \subseteq M$ is CP-generic by definition. If $\mc{N} \prec \mc{M}$, and $\mc{M}$ is a superminimal elementary end extension of $\mc{N}$, then we build $X \subseteq N$ which is CP-generic in $\mc{M}$. To build $X$, we construct finite sets $A_i, B_i$, for each $i \in \omega$, such that the following hold:
	
	\renewcommand{\theenumi}{\alph{enumi}}
	\begin{enumerate}
		\item\label{increasing} if $i < j \in \omega$, then $A_i \subseteq A_j$,
		\item\label{pos-and-neg} for all $i \in \omega$, $A_i \cap B_i = \emptyset$, 
		\item\label{implies-bounded} for all $i \in \omega$, $A_i \cup B_i \subseteq N$, and
		\item\label{implies-cpg} for every definable set $D \subseteq M^n$, $I \subseteq \{1, \ldots, n \}$, if $D$ satisfies the hypothesis of Definition \ref{cpg}, then there is $i < \omega$ and $\anglebracket{x_1, \ldots, x_n} \in D \cap (A_i \cup B_i)$ such that for $1 \leq j \leq n$, $x_j \in A_i$ iff $j \in I$ and $x_j \in B_i$ iff $j \not \in I$.
	\end{enumerate}
To begin the construction, let $A_0 = B_0 = \emptyset$. At stage $i+1$, suppose $A_i$ and $B_i$ are defined and that we are considering a definable $D \subseteq M^n$ and $I \subseteq \{1, \ldots, n \}$ satisfying the hypothesis of Definition \ref{cpg}. By superminimality, if $D$ is definable from $a \in M \setminus N$, there is nothing to show, so assume $D$ is definable from parameter $a \in N$, and there are $b_1, \ldots, b_n \not \in \scl(a)$ such that $\anglebracket{b_1, \ldots, b_n} \in D$. Since $A_i$ and $B_i$ are finite, then let $\anglebracket{x_1, \ldots, x_n} \in D$ be the smallest such that $x_j \not \in A_i\cup B_i$ for any $1 \leq j \leq n$. Then notice that each $x_j \in \mc{N}$, as the tuple $\anglebracket{x_1, \ldots, x_n} $ is definable from elements of $\mc{N}$. Let $A_{i+1} = A_i \cup \{ x_j : j \in I \}$ and $B_{i+1} = B_i \cup \{ x_j : j \not \in I \}$.

As there are only countably many pairs $(D, I)$ where $D$ is definable, $D \subseteq M^n$ for some $n \in \omega$ and $I \subseteq \{ 1, \ldots, n\}$, each such $D$ and $I$ is handled at some stage $i \in \omega$, so \eqref{implies-cpg} holds. Let $X = \bigcup\limits_{i \in \omega} A_i$. Properties \eqref{increasing}, \eqref{pos-and-neg}, and \eqref{implies-cpg} imply that $X$ is CP-generic, and property \eqref{implies-bounded} implies that $X \subseteq N$ (and therefore is bounded).
\end{proof}



This shows that, in recursively saturated models (or any model which is not finitely generated), no proper cut is CP-generic . Of course, the word ``proper" can be omitted from the previous sentence, as one can verify that for any model $\mc{M}$, $M$ itself is not CP-generic.

\cite{neutral} focused on neutral \emph{classes} and neutral inductive sets. A subset $X$ of a model ${\mathcal M}$ is a \emph{class} if for each $a\in M$, $\{x \in X : {\mathcal M}\models x<a\}$ is definable in ${\mathcal M}$. A subset $X$ of $M$ is \emph{inductive} if $({\mathcal M},X)\models {\sf PA}^*$, i.e. the induction schema holds in $({\mathcal M},X)$ for all formulas of  the language of {\sf PA} with a unary predicate symbol interpreted as $X$. All inductive sets are classes. \cite[Corollary 3.3]{neutral} states that no undefinable neutral set in a recursively saturated model is a class. It turns out that, in recursively saturated models, no CP-generic is a class either. In fact, we have more: no strong CP-generic is a class in a nonstandard model.

\begin{thm}\label{not-class}Let $\mc{M}$ be nonstandard. If $X \subseteq M$ is a class, then it is not strongly CP-generic.
\end{thm}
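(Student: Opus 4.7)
The plan is to exploit the class property to encode ``having the same $X$-status'' as a definable binary relation below a nonstandard threshold, and then show that strong CP-genericity fails at this relation for the index set $I = \{1\}$.

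Fix any nonstandard $a \in M$. Because $X$ is a class, there is an $\lpa$-formula $\phi(x, \bar y)$ and a parameter tuple $\bar p \in M$ such that, for every $x < a$, $\mc{M} \models \phi(x, \bar p)$ iff $x \in X$. Set
\[D = \{ \anglebracket{x_1, x_2} \in M^2 : x_1 < a \wedge x_2 < a \wedge (\phi(x_1, \bar p) \leftrightarrow \phi(x_2, \bar p)) \},\]
which is definable in $\mc{M}$ from $a$ and $\bar p$, and take $I = \{1\}$. For any $\anglebracket{b_1, b_2} \in D$, both $b_1, b_2 < a$, so $b_i \in X$ iff $\mc{M} \models \phi(b_i, \bar p)$; the defining formula of $D$ then forces $b_1 \in X \leftrightarrow b_2 \in X$. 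In particular, no $\anglebracket{b_1, b_2} \in D$ can satisfy $b_1 \in X$ and $b_2 \notin X$, so the conclusion of Definition \ref{cpg-strong} for the pair $(D, I)$ must fail.

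It remains to check that the hypothesis of Definition \ref{cpg-strong} is met, i.e., to exhibit an externally infinite $B \subseteq D$ whose tuples have pairwise disjoint entries (and distinct components within each tuple). Since $a$ is nonstandard, $\omega \subseteq [0, a)$, so by external pigeonhole at least one of $X \cap \omega$ or $\omega \setminus X$ is infinite. Pick an externally infinite sequence of distinct standard naturals $n_0, n_1, n_2, \ldots$ lying entirely in one side, and set $B = \{ \anglebracket{n_{2k}, n_{2k+1}} : k \in \omega \}$. Each pair lies in $D$ because both components share the same $X$-status, hence the same $\phi(\cdot, \bar p)$-status; distinct pairs in $B$ use disjoint elements of the sequence. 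This furnishes the required $B$, and so $X$ is not strongly CP-generic.

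There is no serious obstacle: the whole argument rests on two easy observations, namely that the class property supplies, for each nonstandard cutoff $a$, a single $\mc{M}$-definable set that agrees with $X$ on $[0, a)$, and that $\omega$ lies below every nonstandard element, providing an inexhaustible external reservoir from which to extract $B$.
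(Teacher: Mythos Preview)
Your argument is correct. You build a definable \emph{binary} relation $D$ on $[0,a)$ that records ``same $X$-status,'' verify the hypothesis of Definition~\ref{cpg-strong} by extracting pairs from one $X$-colour class inside $\omega$, and observe that no tuple in $D$ can realise the pattern $I=\{1\}$.

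The paper's proof is noticeably shorter because it works in dimension~$1$ rather than~$2$. Assuming for contradiction that $X$ is strongly CP-generic, it fixes some $b>\omega$, notes that (after passing to the complement if necessary) the set $D=[0,b)\cap X$ is itself an infinite $\mc{M}$-definable subset of $M$, and applies strong CP-genericity with $I=\emptyset$ to produce an element of $D$ not in $X$---an immediate contradiction since $D\subseteq X$. There is no need to pair up elements or invoke pigeonhole on $\omega$: the class hypothesis already hands you an infinite definable set contained in $X$ (or in its complement), and that alone violates strong CP-genericity at $n=1$. Your route reaches the same destination but encodes the same idea one dimension higher; the paper's version shows that the $n=1$ case of the definition is already enough.
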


\begin{proof}
	Let $X$ be strongly CP-generic and $b > \omega$. Suppose $B = [0, b) \cap X$ is infinite. If not, replace $X$ with its complement, which is also strongly CP-generic by definition. If $X$ is a class, then $B$ is an infinite definable subset of $M$, and so by strong CP-genericity, there is $x \in B$ such that $x \not \in X$.
\end{proof}

\section{Open Questions}\label{opens}

We close with some questions about CP-genericity. For the first question, we recall the notion of the \textit{substructure lattice} of a model. Given a structure $\mc{M}$, $\Lt(\mc{M}) = \{ \mc{K} : \mc{K} \prec \mc{M} \}$; see \cite[Chapter 4]{ks} for basic definitions and results on substructure lattices.

\begin{ques}Let $\mc{M}$ be a countable, recursively saturated model of $\PA$. For which subsemilattices $L$ of $\Lt(\mc{M})$ is it the case that there is a (strong) CP-generic $X$ such that the elementary substructures of $(\mc{M}, X)$ are exactly expansions of the $\mc{N} \prec \mc{M}$ such that $\mc{N} \in L$?
\end{ques}

Theorem \ref{not-class} asserts that strong CP-generic subsets of nonstandard models are not classes. In particular, this means that no CP-generic subset of a recursively saturated model is a class. In a similar vein to the ideas studied about neutrality in \cite{neutral}, we ask here if sets which are CP-generic (but not strongly CP-generic) can ever be classes in a nonstandard model. 

\begin{ques}For which $\mc{M}$ is there $X \subseteq M$ such that $X$ is a CP-generic class?\end{ques}

\begin{ques}
Let $\mc{M} \models \PA$ and $X \subseteq M$ strongly CP-generic. Is there always a proper $(\mc{N}, Y) \succ (\mc{M}, X)$ such that $Y$ is strongly CP-generic? Under what conditions on $(\mc{M}, X)$ can we always find such an $(\mc{N}, Y)$?
\end{ques}

\bibliographystyle{plain}
\bibliography{refs}

\begin{thebibliography}{1}

\bibitem{neutral}
Athar Abdul-Quader and Roman Kossak.
\newblock Neutrally expandable models of arithmetic.
\newblock {\em Mathematical Logic Quarterly}, 65(2):212--217, 2019.

\bibitem{chatzidakis_pillay}
Zo{\'e} Chatzidakis and Anand Pillay.
\newblock Generic structures and simple theories.
\newblock {\em Annals of Pure and Applied Logic}, 95(1-3):71--92, 1998.

\bibitem{dms13}
Alfred Dolich, Chris Miller, and Charles Steinhorn.
\newblock Extensions of ordered theories by generic predicates.
\newblock {\em The Journal of Symbolic Logic}, 78(2):369--387, 2013.

\bibitem{dms16}
Alfred Dolich, Chris Miller, and Charles Steinhorn.
\newblock Expansions of o-minimal structures by dense independent sets.
\newblock {\em Annals of pure and applied logic}, 167(8):684--706, 2016.

\bibitem{kan}
V.~Kanovei.
\newblock Uniqueness, collection, and external collapse of cardinals in {${\rm
  IST}$} and models of {P}eano arithmetic.
\newblock {\em Journal of Symbolic Logic}, 60(1):318--324, 1995.

\bibitem{ks}
Roman Kossak and James~H. Schmerl.
\newblock {\em The structure of models of {P}eano arithmetic}, volume~50 of
  {\em Oxford Logic Guides}.
\newblock Oxford University Press, 2006.

\end{thebibliography}

\end{document}